\newtheorem{theorem}{Theorem}[section]
\newtheorem{lemma}[theorem]{Lemma}
\theoremstyle{definition}
\newtheorem{example}[theorem]{Example}
\theoremstyle{remark}
\newtheorem{remark}[theorem]{Remark}
\numberwithin{equation}{section}
\newcommand*{\rom}[1]{\expandafter\@slowromancap\romannumeral #1@}
\begin{document}

\title[ A convergence condition for Newton-Raphson method ]{ A convergence condition for Newton-Raphson method  }

\author[H. Khandani]{Hassan Khandani$^1$ }

\address{$^1$Department of Mathematics, Mahabad-Branch Islamic Azad University, Mahabad, Iran.}
\email{khandani.hassan@yahoo.com,  khandani.hassan@gmail.com, }
\email{ khandani@iau-mahabad.ac.ir}
\address{}

\renewcommand{\subjclassname}{\textup{2020} Mathematics Subject Classification}
\subjclass[]{ 26A18,47H10,47J26.}

\keywords{ Fixed points, Iterative sequence, Newton-Raphson Method, Convergence condition, Estimation of roots.}

\begin{abstract}
In this paper we study the convergence of Newton-Raphson method. For this method there exists some convergence results which are practically not very useful and just guarantee the convergence of this method when the first term of this sequence is very close to the guessed root \cite{sulimayer}. Khandani et al. introduced a new iterative method to estimate the roots of real-valued functions \cite{khandani}. Using this method we introduce some simple and  easy-to-test conditions under which Newton-Raphson sequence converges to its guessed root even when the initial point is chosen very far from this  root. More clearly, for a real-valued second differentiable function $f:[a,c]\to \mathbb R$ with $f^{''}f\ge 0$ on $(a,c)$ where $c$ is the unique root of $f$ in $[a,c]$, the Newton-Raphson sequence  $f$ converges to $c$ for each $x_0\in[a,c]$ provided $f$ satisfies some other simple conditions on this interval. A similar result holds if $[a,c]$ be replaced with $[c,b]$.  Our study will enable us to predict accurately where Newton-Raphson sequence converges.
\end{abstract}
\maketitle
%=============================================%
%=============================================%
%=============================================%
\section{Introduction and preliminaries}
 %%%%%%%%%%%%%%%%%%%%%%%
Newton-Raphson method is one of the most important methods in estimating the roots of a real-valued function. As an advantage of this method we can refer to its speed of convergence. Indeed, when $f$ has second continuous derivative on an interval which contains the guessed root and the initial point is very close to this root, then this method  converges \cite{kendal,sulimayer}. To know how this method has developed through out the time we refer the reader to \cite{tj}. This method has its own disadvantages too, among them, we can refer to its high dependence on how  the initial point is chosen to start the iterate sequence. In an interval where there is a point $a$ with $f^{'}(a)=0$ the behaviour of this sequence is quite wired and unpredictable. In this manuscripts we provide some results which make sure where the initial point should be chosen to make sure the convergence of the Newton-Raphson sequence. First, we need the following definitions and results to start our study.

For any differentiable function $f$ on $[a,b]$ the Newton-Raphson sequence  is defined recursively as follows, where $x_0\in [a,b]$ is called the starting or initial point,
\begin{equation}\label{newtonseq}
x_{n+1}=x_n-\frac{f(x_n)}{f^{'}(x_n)}\text { for all } n\ge 0.
\end{equation}
%%%%%%%%%%%%%%%%%%%%%%%%%%%
Khandani et al. presented the following results which guarantee the convergence of an iterate sequence on a whole interval with very simple and practical  conditions \cite{khandani}. We use these results and provide some simple conditions under which the Newton-Raphson sequence converges.
%%%%%%%%%%%%%%%%%%%%%%%%%%%%
\begin{lemma}\label{l1}[H. Khandani, F. Khojasteh \cite{khandani}]
Let $f$ be a continuous real-valued function on $[a,c]$ which is differentiable on $(a,c)$  with $f^{'}(x)\ge -1$ on $(a,c)$  and  $f(x)>x$ for each $x\in[a,c)$ and $c$ is the unique fixed point of $f$ in $[a,c]$. Let $x_0\in [a,c)$ and for each $n\ge 0$ define:
\begin{equation}\label{meanex}
x_{n+1}=\frac{x_n+f(x_n)}{2},
\end{equation}
then the sequence $\{x_n\}$  converges to $c$.
\end{lemma}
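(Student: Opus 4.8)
The plan is to recast the recursion as the iteration of a single auxiliary map and then run a monotone-convergence argument. Define $g:[a,c]\to\mathbb{R}$ by $g(x)=\frac{x+f(x)}{2}$, so that \eqref{meanex} reads $x_{n+1}=g(x_n)$. Since $c$ is the fixed point we have $f(c)=c$, hence $g(c)=c$; thus $c$ is also a fixed point of $g$, and the goal becomes proving $x_n\to c$.

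First I would establish three structural facts about $g$ on $[a,c]$. (i) \emph{$g$ is non-decreasing}: because $f$ is differentiable on $(a,c)$ with $f'(x)\ge -1$, we get $g'(x)=\frac{1+f'(x)}{2}\ge 0$ there, and continuity of $g$ on the closed interval upgrades this to monotonicity on all of $[a,c]$. (ii) \emph{$g$ strictly exceeds the identity below $c$}: from $f(x)>x$ on $[a,c)$ we obtain $g(x)=\frac{x+f(x)}{2}>x$ for every $x\in[a,c)$. (iii) \emph{$g$ maps $[a,c]$ into itself}: combining (i) with $g(c)=c$ gives $g(x)\le g(c)=c$ whenever $x\le c$, while (ii) gives $g(x)\ge x\ge a$; hence $a\le g(x)\le c$.

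With these in hand, a routine induction shows the whole sequence lives in $[a,c]$, and by (ii) it is strictly increasing until it possibly reaches $c$: if $x_n\in[a,c)$ then $x_n<g(x_n)=x_{n+1}\le c$. So $\{x_n\}$ is monotone increasing and bounded above by $c$, and therefore converges to some limit $L\in(a,c]$ by the monotone convergence theorem. Passing to the limit in $x_{n+1}=g(x_n)$ and using continuity of $g$ (equivalently of $f$) yields $L=g(L)$, i.e. $f(L)=L$; since $c$ is the unique fixed point of $f$ in $[a,c]$—equivalently, since $f(x)>x$ forbids a fixed point in $[a,c)$—we conclude $L=c$.

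The step I expect to be the crux is (iii), and specifically the upper bound $g(x)\le c$: this is precisely where the hypothesis $f'(x)\ge -1$ does its work, guaranteeing the monotonicity of $g$ that keeps the averaged iterate from overshooting the fixed point $c$. Without this control the increasing sequence could in principle escape above $c$, so verifying the invariance of $[a,c]$ is the heart of the argument; the remaining monotone-convergence and limit-identification steps are then standard.
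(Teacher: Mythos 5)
Your proof is correct and complete: monotonicity of $g(x)=\frac{x+f(x)}{2}$ from $f'\ge -1$, the invariance $g([a,c])\subseteq[a,c]$, the resulting increasing bounded sequence, and identification of the limit via uniqueness of the fixed point all check out, and you rightly flag the bound $g(x)\le c$ as the only place the derivative hypothesis is needed. Note that the paper states Lemma \ref{l1} without proof, citing \cite{khandani}, so there is no in-paper argument to compare against; your monotone-convergence route is the natural one and supplies a valid self-contained proof.
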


%%%%%%%%%%%%%%%%%%%%%%%%%    extending  Lemma 2
\begin{lemma}\label{l2} [H. Khandani, F. Khojasteh \cite{khandani}]
Let $f$ be a continuous real valued function on $[c,b]$ which is differentiable on $(c,b)$  with $f^{'}(x)\ge -1$ on $(c,b)$  and  $f(x)<x$ for each $x\in(c,b]$ and $c$ is the unique fixed point of $f$ in $[c,b]$. Let $x_0\in (c,b]$ and for each $n\ge 0$ define:
\begin{equation}
x_{n+1}=\frac{x_n+f(x_n)}{2},
\end{equation}
then the sequence $\{x_n\}$  converges to $c$.
\end{lemma}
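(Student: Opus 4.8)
The plan is to deduce Lemma~\ref{l2} from Lemma~\ref{l1} by a reflection through the fixed point, so that no new monotonicity argument is needed. First I would introduce the reflected function $g\colon[-b,-c]\to\mathbb R$ defined by $g(t)=-f(-t)$, together with the reflected iterates $y_n=-x_n$. The point of this substitution is that it carries every hypothesis of Lemma~\ref{l2} into the corresponding hypothesis of Lemma~\ref{l1}: continuity and differentiability are preserved under $t\mapsto -t$; the chain rule gives $g'(t)=f'(-t)\ge -1$ for $t\in(-b,-c)$; and the equation $g(t)=t$ is equivalent to $f(-t)=-t$, so the unique fixed point $c$ of $f$ in $[c,b]$ corresponds to the unique fixed point $-c$ of $g$ in $[-b,-c]$, located at the right endpoint exactly as in Lemma~\ref{l1}.

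Next I would check the strict inequality. For $t\in[-b,-c)$ one has $-t\in(c,b]$, and the hypothesis $f(-t)<-t$ rearranges to $g(t)=-f(-t)>t$, which is precisely the condition $f(x)>x$ demanded by Lemma~\ref{l1}. It remains to verify that the two iterations match. A direct computation gives
\begin{equation*}
y_{n+1}=-x_{n+1}=-\frac{x_n+f(x_n)}{2}=\frac{y_n+\bigl(-f(-y_n)\bigr)}{2}=\frac{y_n+g(y_n)}{2},
\end{equation*}
so $\{y_n\}$ is exactly the sequence of Lemma~\ref{l1} for $g$ started at $y_0=-x_0\in[-b,-c)$. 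Applying Lemma~\ref{l1} yields $y_n\to -c$, and therefore $x_n=-y_n\to c$, which is the desired conclusion.

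Alternatively, in case the reflection bookkeeping is judged less transparent than a self-contained argument, I would prove the statement directly by showing that $\{x_n\}$ decreases monotonically to $c$. The descent is immediate, since $f(x_n)<x_n$ forces $x_{n+1}<x_n$. The boundedness below by $c$ is where the condition $f'\ge -1$ enters: setting $h(x)=x+f(x)$ one has $h'(x)=1+f'(x)\ge 0$, so $h$ is non-decreasing and $h(x_n)\ge h(c)=2c$ whenever $x_n\ge c$, giving $x_{n+1}=\tfrac12 h(x_n)\ge c$. A monotone bounded sequence converges to some $L\ge c$, and passing to the limit in the recursion gives $L=\tfrac12\bigl(L+f(L)\bigr)$, i.e.\ $f(L)=L$; uniqueness of the fixed point then forces $L=c$. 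The main obstacle in either route is the same: controlling the lower bound so that the iterates never overshoot past $c$, which is exactly the role played by the hypothesis $f'(x)\ge -1$ through the monotonicity of $x+f(x)$.
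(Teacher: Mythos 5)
Your proposal is correct, but note that the paper itself contains no proof of this lemma: it is quoted verbatim from the cited preprint of Khandani and Khojasteh \cite{khandani} and used as a black box, so there is nothing in this manuscript to compare your argument against line by line. Both of your routes are sound. The reflection route is the cleanest way to see that Lemma~\ref{l2} is formally equivalent to Lemma~\ref{l1}: all substitutions check out ($g'(t)=f'(-t)\ge -1$ by the chain rule, $g(t)>t$ on $[-b,-c)$, $-c$ the unique fixed point, and $y_{n+1}=\frac{y_n+g(y_n)}{2}$), and $y_0=-x_0\in[-b,-c)$ as Lemma~\ref{l1} requires, so the conclusion transfers immediately. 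The direct route is also valid and has the merit of exposing the mechanism that the paper's black-box usage hides: the hypothesis $f'\ge -1$ enters exactly through the monotonicity of $h(x)=x+f(x)$, which prevents the iterates from overshooting below $c$; together with the descent $x_{n+1}\le x_n$ (strict while $x_n>c$, and constant if ever $x_n=c$) and continuity of $f$ at the limit point, uniqueness of the fixed point forces the limit to be $c$. The only cosmetic point worth tightening in the direct version is the phrase ``$f(x_n)<x_n$ forces $x_{n+1}<x_n$,'' which presupposes $x_n>c$; since your lower-bound induction keeps every $x_n$ in $[c,b]$, the case $x_n=c$ is trivial (the sequence is constant from then on), so this is a matter of wording rather than a gap.
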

%%%%%%%%%%%%%%%%%%%%%%      notations
In this manuscript we show the set of real numbers by $\mathbb R$. We denote the set  $\{0,1,2,\dots \}$  of none negative integers by $\mathbb N$. For each $a,b\in \mathbb R$ with $a<b$, $[a,b]=\{x\in \mathbb R:a\le x\le b\}$ and $(a,b)=\{x\in R:a< x< b\}$ will denote closed and open interval from $a$ to $b$. Let $f$ be a real valued function on $\mathbb R$  we show the left  and right derivative of f at $a$ by $f^{'}(a-),f^{'}(a+)$ respectively.

%%%%%%%%%%%%%%%%%%%%%%%%%%%%%%%%%
\newpage
\section{Main results}
%%%%%%%%%%%          Main Theorem              %%%%%%%%%%%%%%%%%%%%
All we need to provide a convergence theorem for the Newton-Raphson method are  Lemmas $\ref{l1}$ and $\ref{l2}$. Therefore, we present our results as follows.
\begin{theorem}\label{convergence1}
Suppose that $a,c\in \mathbb R$  with $a<c$,  $f:[a,c]\to \mathbb R$ is a real-valued function, $c$ is the unique root of $f$ in $[a,c]$. Also assume that $f^{''}(x),f^{'}(x)$ exist for each $x\in (a,c)$, $f(x)f^{''}(x)\ge 0$  for each $x\in (a,c)$, $f(x)f^{'}(x)< 0$  for each $x\in [a,c)$,  $f^{'}(x)\not =0$  for each $x\in (a,c)$, $f^{'}(a+)\not =0$,  $f^{'}(c-)\not =0$. For each $x_0\in [a,c]$ define:
\begin{equation}\label{newtonseq}
x_{n+1}=x_n-\frac{f(x_n)}{f^{'}(x_n)}\text { for all } n\ge 0.
\end{equation}
Then, $\{x_n\}$ converges to $c$ as $n\to \infty$.
\end{theorem}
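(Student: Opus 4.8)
The plan is to reduce the Newton--Raphson recursion to the arithmetic-mean recursion governed by Lemma \ref{l1}. I would introduce the auxiliary function
\begin{equation*}
g(x) = x - 2\,\frac{f(x)}{f'(x)},
\end{equation*}
defined on $(a,c)$ and extended to the endpoints by $g(a) = a - 2f(a)/f'(a+)$ and $g(c) = c$. The whole point of the factor $2$ is the identity $\frac{x+g(x)}{2} = x - \frac{f(x)}{f'(x)}$, so that the Newton--Raphson iterate $x_{n+1} = x_n - f(x_n)/f'(x_n)$ coincides exactly with the midpoint iterate $x_{n+1} = \frac{x_n + g(x_n)}{2}$ attached to $g$. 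Consequently it suffices to check that $g$ meets the hypotheses of Lemma \ref{l1} on $[a,c]$ and then quote that lemma verbatim; the case $x_0 = c$ is trivial since then $x_n = c$ for every $n$.

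The verification then splits into a few short checks, each matching one hypothesis of Lemma \ref{l1}. Since $f(x)f'(x) < 0$ on $[a,c)$, the ratio $f(x)/f'(x)$ is negative there, whence $-2f(x)/f'(x) > 0$ and $g(x) > x$ for all $x \in [a,c)$; note this holds uniformly in the two sign regimes ($f>0,\,f'<0$ or $f<0,\,f'>0$), so they need not be separated. Next, $g$ is differentiable on $(a,c)$ because $f'$ exists and never vanishes there and $f''$ exists, and a direct computation gives
\begin{equation*}
g'(x) = -1 + 2\,\frac{f(x)f''(x)}{f'(x)^2}.
\end{equation*}
As $f(x)f''(x) \ge 0$ and $f'(x)^2 > 0$, this yields $g'(x) \ge -1$ on $(a,c)$, precisely the derivative bound demanded by Lemma \ref{l1}. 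Finally, $g(x)=x$ is equivalent to $f(x)=0$, so that $c$, being the unique root of $f$ in $[a,c]$, is the unique fixed point of $g$ in $[a,c]$.

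The remaining and most delicate step is the continuity of $g$ on the closed interval $[a,c]$, and this is exactly where the endpoint hypotheses $f'(a+)\neq 0$ and $f'(c-)\neq 0$ enter. On $(a,c)$ continuity is automatic, and at $a$ the nonvanishing of $f'(a+)$ makes $g(a)$ well defined and equal to its one-sided limit. The subtle point is at the root: one must show that the apparent singularity of $f(x)/f'(x)$ at $c$ is removable, i.e. that $f(x)/f'(x) \to 0$ as $x\to c^-$. Here $f(x)\to f(c)=0$, while the hypothesis $f'(c-)\neq 0$ (together with the fixed sign of $f'$ forced by $f'\neq 0$ on $(a,c)$) keeps $f'$ from collapsing to zero near $c$, so the ratio tends to $0$ and $g(x)\to c = g(c)$. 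I expect this endpoint continuity at $c$ to be the main obstacle, since it is the one place where the indeterminate form $f/f'$ must be controlled directly rather than through the sign and convexity conditions used everywhere else. Once it is secured, Lemma \ref{l1} applies to $g$ and delivers $x_n \to c$.
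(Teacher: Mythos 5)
Your proposal is essentially the paper's own proof: the paper defines the same auxiliary map $F(x)=x-\frac{2f(x)}{f'(x)}$, uses the same midpoint identity $\frac{x+F(x)}{2}=x-\frac{f(x)}{f'(x)}$, and concludes by Lemma \ref{l1}. You are in fact more careful than the paper, which never discusses how $F$ is defined at $c$ or why it is continuous on the closed interval $[a,c]$, a hypothesis that Lemma \ref{l1} genuinely requires. One correction to your endpoint argument, though: constancy of the sign of $f'$ together with $f'(c-)\neq 0$ does \emph{not} by itself keep $f'$ away from zero near $c$; one can construct $f$ with $f'>0$ on $(a,c)$ and one-sided derivative $f'(c-)=1$, yet $\liminf_{x\to c^-}f'(x)=0$ (slope mostly $1$ with ever thinner, ever flatter dips accumulating at $c$). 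What actually controls $f'$ is the hypothesis $ff''\ge 0$: since $f$ has constant sign on $[a,c)$, $f''$ has constant sign there, so $f'$ is monotone on $(a,c)$; hence $\lim_{x\to c^-}f'(x)$ exists, and by the mean value theorem it equals the one-sided derivative $f'(c-)\neq 0$, which is what forces $f(x)/f'(x)\to 0$ and makes the singularity removable. The same monotonicity-plus-MVT argument is also what identifies $\lim_{x\to a^+}f'(x)$ with $f'(a+)$ at the other endpoint, a step your phrase ``equal to its one-sided limit'' silently assumes.
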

\begin{proof}
For each $x\in [a,c]$, define $F(x)=x-\frac{2f(x)}{f^{'}(x)}$. By our assumptions $F(x)>x$ for each $x\in[a,c)$ and
\begin{equation}\label{newtonseq}
F^{'}(x)=-1+2\frac{f^{''}(x)f(x)}{(f^{'}(x))^2}\ge -1 \text{ for each } x\in(a,c).
\end{equation}
We have $\frac{F(x_n)+x_n}{2}=x_n-\frac{f(x_n)}{f^{'}(x_n)}=x_{n+1}$. Now, by Lemma $\ref{l1}$ $\{x_n\}$ converges to $c$ as $n\to \infty$ where $c$ is the fixed point of $F$. We have $c=c-\frac{f(c)}{f^{'}(c)}$ which follows that $f(c)=0.$
\end{proof}
%%%%%%%%%%%%%%%%%%%%%%%%%%
\begin{theorem}\label{convergence2}
Suppose that $c,b\in \mathbb R$  with $c<b$,  $f:[c,b]\to \mathbb R$ is a real-valued function, $c$ is the unique root of $f$ in $[c,b]$. Also assume that $f^{''}(x),f^{'}(x)$ exist for each $x\in (c,b)$, $f(x)f^{''}(x)\ge 0$  for each $x\in (c,b)$, $f(x)f^{'}(x)>0$  for each $x\in (c,b]$,  $f^{'}(x)\not =0$  for each $x\in (c,b)$,  $f^{'}(c+)\not =0$ and  $f^{'}(b-)\not =0$. For each $x_0\in [c,b]$ define:
\begin{equation}\label{newtonseq}
x_{n+1}=x_n-\frac{f(x_n)}{f^{'}(x_n)}\text { for all } n\ge 0.
\end{equation}
Then, $\{x_n\}$ converges to $c$ as $n\to \infty$.
\end{theorem}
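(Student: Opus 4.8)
The plan is to mirror the proof of Theorem \ref{convergence1}, replacing the role of Lemma \ref{l1} with Lemma \ref{l2}, since the hypotheses here are the right-hand-side analogues of those in the first theorem. First I would define the auxiliary function $F(x) = x - \frac{2f(x)}{f'(x)}$ on $[c,b]$, which is well-defined because $f'(x) \neq 0$ on $(c,b)$ and $f'(c+), f'(b-) \neq 0$ at the endpoints. The crucial observation, exactly as before, is that the Newton-Raphson iterate is the midpoint of $x_n$ and $F(x_n)$: one computes $\frac{x_n + F(x_n)}{2} = x_n - \frac{f(x_n)}{f'(x_n)} = x_{n+1}$, so the defining recursion coincides with the averaging recursion of Lemma \ref{l2} driven by $F$.

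Next I would verify that $F$ satisfies the hypotheses of Lemma \ref{l2} on $[c,b]$. The sign condition $f(x)f'(x) > 0$ on $(c,b]$ gives $\frac{f(x)}{f'(x)} > 0$, hence $F(x) = x - 2\frac{f(x)}{f'(x)} < x$ for each $x \in (c,b]$ --- this is precisely the inequality $F(x) < x$ that Lemma \ref{l2} requires, and it is the mirror of the $F(x) > x$ condition used in Theorem \ref{convergence1}. The derivative condition follows from $f(x)f''(x) \ge 0$: differentiating gives $F'(x) = -1 + 2\frac{f''(x)f(x)}{(f'(x))^2} \ge -1$ on $(c,b)$, where I use that $f''$ exists (so $f'$ is continuous) to justify that $F$ is differentiable there. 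Finally, $F(x) = x$ forces $\frac{f(x)}{f'(x)} = 0$, i.e. $f(x) = 0$, so $c$ being the unique root of $f$ in $[c,b]$ makes $c$ the unique fixed point of $F$.

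With these checks in place, for any starting point $x_0 \in (c,b]$ Lemma \ref{l2} yields convergence of $\{x_n\}$ to $c$; the degenerate case $x_0 = c$ gives a constant sequence and is handled trivially, so the conclusion holds for every $x_0 \in [c,b]$. Since $c$ is a fixed point of $F$, the limit satisfies $c = c - \frac{f(c)}{f'(c)}$, confirming $f(c) = 0$ and that the sequence converges to the intended root.

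I expect the only delicate point to be the endpoint bookkeeping: confirming that $F$ extends continuously to the closed interval $[c,b]$ using the hypotheses $f'(c+) \ne 0$ and $f'(b-) \ne 0$ (at $c$ one has $f(c) = 0$, so $F(c) = c$), and checking that the sign hypotheses, which are reversed relative to Theorem \ref{convergence1}, line up correctly with the $<$ direction demanded by Lemma \ref{l2} rather than the $>$ direction of Lemma \ref{l1}. Everything else is a routine transcription of the first proof.
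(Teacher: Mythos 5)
Your proposal is correct and follows exactly the paper's own argument: defining $F(x) = x - \frac{2f(x)}{f'(x)}$, checking that $F(x) < x$ on $(c,b]$ and $F'(x) \ge -1$ on $(c,b)$, recognizing the Newton iterate as the midpoint $\frac{x_n + F(x_n)}{2}$, and applying Lemma \ref{l2}. In fact your write-up is somewhat more careful than the paper's, since you explicitly verify the uniqueness of the fixed point of $F$, handle the degenerate case $x_0 = c$, and flag the endpoint continuity bookkeeping that the paper passes over in silence.
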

\begin{proof}
For each $x\in [c,b]$, define $F(x)=x-\frac{2f(x)}{f^{'}(x)}$. By our assumptions $F(x)<x$ for each $x\in(c,b]$ and
\begin{equation}\label{newtonseq}
F^{'}(x)=-1+2\frac{f^{''}(x)f(x)}{(f^{'}(x))^2}\ge -1 \text{ for each } x\in(c,b).
\end{equation}
We have $\frac{F(x_n)+x_n}{2}=x_n-\frac{f(x_n)}{f^{'}(x_n)}=x_{n+1}$. Now, by Lemma $\ref{l2}$ $\{x_n\}$ converges to $c$ as $n\to \infty$ where $c$ is the fixed point of $F$. We have $c=c-\frac{f(c)}{f^{'}(c)}$ which follows that $f(c)=0.$
\end{proof}
The above two results give us all is needed to know the behaviour of Newton-Raphson sequence and how the starting point should be chosen.
%%%%%%%%%%%%%%%%%%%%%%%%%%%%%%%%%%%%%  remark
\begin{remark}In both of Theorem $\ref{convergence1}$ and  Theorem $\ref{convergence2}$ we see that $ff^{''}\ge 0$ on the related intervals. Example $\ref{positive}$ shows that this condition can not be replaced with $ff^{''}<0$. In fact, this example shows that if   $ff^{''}<0$ on an interval, then nothing can be said about the convergence of the Newton-Raphson sequence and its convergence is depended upon how this function has been defined out of this interval. Therefore, we choose the starting point for the Newton-Raphson sequence from intervals on which $ff^{''}\ge 0$. In both of these results $f^{'}\not = 0$ on the whole interval. It is easy to see that in each of these results the Newton-Raphson sequence is monotone. Taking these facts into account, the behaviour of Newton-Raphson sequence is under control and indeed converges to the guessed root.
\end{remark}
%%%%%%%%%%%%%%%%%%%%%%%%%%%%%%%%%% example
\begin{example}\label{positive}
Suppose that  $g(x)=x^2+x$ for each $x\in [- \frac{1}{2}, \frac{1}{2}]$. Define $f:[- \frac{1}{2}, \frac{1}{2}]\to \mathbb R$  by:
\begin{equation*}
f(x)=\begin{cases}
     x^{2}-x\quad &\text{if } \, 0\le x\le \frac{1}{2}\\
         x^{2}+x\quad &\text{if} \, - \frac{1}{2}\le x\le 0\\
     \end{cases}
\end{equation*}
We know that $f=g$ on the interval $[-\frac{1}{2},0]$, $c=0$ is the unique root of $f$ and $g$ in this interval and $f$ and $g$ satisfy all conditions of Theorem $\ref{convergence1}$ except that  $f^{''}(x)f(x)< 0$ and $g^{''}(x)g(x)<0$ for each $x\in (-\frac{1}{2},0)$. Denote the Newton-Raphson sequence of $f,g$ by $\{x_n\}$ and $\{y_n\}$ respectively with initial point $x_0=y_0=-\frac{1}{3}$. We see that  $x_0=-\frac{1}{3}$, $x_1= \frac{1}{3}$, $x_2= -\frac{1}{3}, \dots$ and the sequence  $\{x_n\}$ oscillating between $-\frac{1}{3}$ and  $\frac{1}{3}$ so it is not convergent. We also have $y_0=-\frac{1}{3}, y_1=\frac{1}{3}$. Notice that $\frac{1}{3}\in (0,\frac{1}{2})$ and $g$ satisfies all conditions of Theorem  $\ref{convergence2}$ on the interval $[0,\frac{1}{2}]$. Therefore, the Newton-Raphson sequence of $g$ at $\frac{1}{3}$, which is $\{y_1,y_2,y_3,\dots\}$ converges to $0$ by this theorem. So, $\{y_n\}$ is a convergent sequence while  the sequence $\{x_n\}$ is not. This shows that in Theorem $\ref{convergence1}$ nothing can be said about the convergence of the Newton-Raphson sequence  when $ f^{''}f< 0$ on $(a,c)$. This example shows that  when $ f^{''}f< 0$ on $(a,c)$ the convergence of the Newton-Raphson sequence is depended upon the definition of $f$ outside of this interval. Similarly, the same is true about Theorem $\ref{convergence2}$.
\end{example}
%%%%%%%%%%%%%%%%%%%%%%%%%
\begin{example}Suppose that $f(x)=x^{3}-2x+2$ for each $x\in \mathbb R$. $f$ has a root $c\in [-2,0]$. For $x=\pm \sqrt{\frac{{2}}{3}}$, $f^{'}(x)=0$ and we see that $ \sqrt{\frac{{2}}{3}}$ is the local minimum of $f$. $f(\sqrt{\frac{{2}}{3}})>0$ which follows that $c$ is the unique root of $f$. $f>0$ on $(c,+\infty)$, $f<0$ on $(-\infty,c)$. We see that $f^{''}(x)f(x)<0$ for each $x\in (c,0)$, therefore we don't choose the initial point $x_0$ in $(c,0)$. The interval $[0,+\infty)$ is also not suitable to choose the initial point from. Because, the troubled interval $(c,0)$ is between this interval and our guessed root. Practically, for many points $x_0\in (0,+\infty)$ the sequence $\{x_n\}$, after some jumping away this way and that, eventually converges to the root and some time it doesn't \cite{khandani}. But, we are not interested in  convergence by chance for our sequence. We notice that, $f$ satisfies all conditions of Theorem $\ref{convergence1}$ on the interval $[a,c)$ for each $a\in (-\infty,-2]$. By Theorem $\ref{convergence1}$  the Newton-Raphson sequence  converge to $c$ for each $x_0\in(-\infty,-2]$. The first 20 terms of $\{x_n\}$ with $x_0=-400$ which is very far from $c$ are as follows:\\

-400, \\
-266.6677819490915, -177.78019734972494, -118.52265267881265, -79.01889929291954, -52.68499811014733, -35.13201021893894, -23.43453810797174, -15.643229227593249, -10.46004014373921, -7.022240831542441, -4.759356916742918, -3.299443626313881, -2.4083528310926825, -1.9439433123997434, -1.7877742400378036, -1.7695329436681617, -1.7692923957961018, -1.7692923542386327, -1.7692923542386314.
\end{example}

%%%%%%%%%%%%%%% Acknowledgement
{Acknowledgement}
This research has been done and supported financially by Islamic Azad university of Mahabad with project code 235468168.
%%%%%%%%%%%%%%%%%%%%%%%%%%%%%%%%%%%
\bibliographystyle{amsplain}

\end{document}